\title[Numerical Hodge standard conjecture for squares]{The numerical Hodge standard conjecture for the square of a simple abelian variety of prime dimension}
\author{Teruhisa Koshikawa}
\address{Research Institute for Mathematical Sciences, Kyoto University}
\email{teruhisa@kurims.kyoto-u.ac.jp}
\theoremstyle{plain}
\newtheorem{thm}{Theorem}[section]
\newtheorem{lem}[thm]{Lemma}
\newtheorem{prop}[thm]{Proposition}
\newtheorem{cor}[thm]{Corollary}
\theoremstyle{definition}
\newtheorem{rem}[thm]{Remark}
\newtheorem{conj}[thm]{Conjecture}
\newcommand\bC{\mathbf C}
\newcommand\bF{\mathbf F}
\newcommand\bQ{\mathbf Q}
\newcommand\bZ{\mathbf Z}
\newcommand\cA{\mathcal A}
\newcommand\cE{\mathcal E}
\newcommand\cL{\mathcal L}
\newcommand\cZ{\mathcal Z}
\DeclareMathOperator{\Aut}{Aut}
\DeclareMathOperator{\End}{End}
\DeclareMathOperator{\Sym}{Sym}
\DeclareMathOperator{\Hom}{Hom}
\DeclareMathOperator{\prim}{prim}
\DeclareMathOperator{\num}{num}
\begin{document}

\begin{abstract}
We prove the numerical Hodge standard conjecture for the square of a simple abelian variety of prime dimension, and also in some related cases.     
\end{abstract}

\maketitle

\section{Introduction}
Recently, Ancona proved the numerical Hodge standard conjecture for abelian fourfolds \cite{Ancona}. 
In fact, he proved a general theorem for certain rank 2 pure motives in mixed characteristic \cite{Ancona}*{8.1}, and showed that this general theorem is applicable to abelian fourfolds over finite fields. In this paper, we point out some other cases where the general theorem can be applied. (See \cite{Ancona}*{A.9} for another example.) 

In Ancona's work, the main cases are
\begin{enumerate}
    \item an absolutely simple abelian fourfold, and
    \item the product of a simple abelian threefold and an elliptic curve. 
\end{enumerate}
(See \cite{Ancona}*{A.8} for some discussion.)
In this paper, we generalize the second case as follows: 

\begin{thm}\label{main:example}
Let $A$ be a simple abelian variety over a field $k$. Assume either
\begin{itemize}
\item $\dim A$ is \emph{prime}, or
\item some specialization of $A$ to a finite field is absolutely simple and \emph{almost ordinary}\footnote{This means its Newton polygon is the same as the one of the product of a supersingular elliptic curve and $\dim A-1$ ordinary elliptic curves. Such a simple abelian variety exists \cite{LO}.}. 
\end{itemize}
Let $E$ be an elliptic curve. The numerical Hodge standard conjecture holds for $A\times A$ and $A\times E$. 
\end{thm}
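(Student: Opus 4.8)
The plan is to reduce the numerical Hodge standard conjecture for $A\times A$ and $A\times E$ to Ancona's rank $2$ criterion \cite{Ancona}*{8.1}, by using the hypotheses on $A$ to force the relevant motives to decompose into pieces that are either classical or of rank $2$.

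First I would reduce to the case $k=\bF_q$. In characteristic $0$ the conjecture follows from the Hodge--Riemann bilinear relations: algebraic classes are Hodge classes, the signed Lefschetz form $(-1)^i(h^{d-2i}\cdot x\cdot x)$ (with $d=\dim X$, $h$ an ample class, and $x$ a primitive class of codimension $i$) is positive definite on primitive Hodge classes and hence on the primitive algebraic ones, and numerical equivalence is precisely the radical of the intersection pairing, so the induced form on numerical cycles is positive definite. In positive characteristic one spreads $A$ and $E$ out, specializes to a finite field, and uses that the specialization map on numerical cycles is injective and preserves the intersection form, so that positivity at the special fibre descends to the generic one; this is exactly the mixed characteristic picture underlying \cite{Ancona}*{8.1}. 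I would therefore fix a lift of $A$ (and $E$) to an abelian scheme over a mixed characteristic discrete valuation ring, placing us in the setting of that theorem.

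Next I would decompose the motives. Writing $V=\mathfrak{h}^1(A)$ and using $\mathfrak{h}^1(A\times A)=V\otimes\bQ^2$, the $\GL_2$-action on the $\bQ^2$ factor commutes with the motivic structure, and the Cauchy (skew-Howe) decomposition writes $\bigwedge^\bullet(V\otimes\bQ^2)=\bigoplus_\lambda S_\lambda(V)\otimes S_{\lambda'}(\bQ^2)$, where $S_{\lambda'}(\bQ^2)$ is a multiplicity space carrying the $\End^0(A\times A)$-action (and the Künneth decomposition plays the analogous role for $A\times E$). Since $S_{\lambda'}(\bQ^2)$ vanishes unless $\lambda$ has at most two columns, only a short list of Schur pieces survives. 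The conjecture then splits over the isotypic components, and on each the signed Lefschetz form is the product of a form on the multiplicity space with a scalar attached to $S_\lambda(V)$. The divisor classes contribute the Hodge index theorem, which is known over any field; what remains are the rank $2$ motives to which \cite{Ancona}*{8.1} applies.

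The main obstacle, and the point where the hypotheses enter, is verifying that every remaining piece really is a rank $2$ motive satisfying the hypotheses of \cite{Ancona}*{8.1}; this is a question about $D=\End^0(A)$ and the Frobenius $\pi$. When $g=\dim A$ is prime, Honda--Tate theory and Albert's classification leave few possibilities: if $F$ is the centre of $D$ and $e^2=[D:F]$, then $2g=e\,[F:\bQ]$, and since the divisors of $2g$ are $1,2,g,2g$ the pair $(e,[F:\bQ])$ is tightly constrained, ranging from $D=F$ a CM field of degree $2g$ to a short list of quaternionic types, in each of which the surviving multiplicity spaces have rank $\le 2$. In the almost ordinary case the Newton polygon has a single supersingular slope pair, which pins down the local invariants of $D$ at $p$ and the Hodge slopes of the relevant $S_\lambda(V)$, again reducing everything to rank $2$. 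I expect the bulk of the work to be this case analysis of $\End^0(A)$, together with checking that in each case the characteristic $0$ lift carries the Hodge type demanded by \cite{Ancona}*{8.1}; once that is in place, the conjecture follows component by component.
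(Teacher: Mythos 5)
Your reduction to finite fields and your Honda--Tate analysis are fine as far as they go (indeed, for $g$ an odd prime $\geq 3$ the quaternionic types you worry about cannot occur, and $\End(A)\otimes\bQ$ is forced to be a CM field of degree $2g$), but the core of your argument rests on the wrong decomposition, and the step that actually uses the hypotheses is missing. The Cauchy/skew-Howe decomposition of $\bigwedge^\bullet(V\otimes\bQ^2)$ under $\GL_2$ does not produce the rank $2$ motives: for $|\lambda|=2g$ the surviving shapes are $\lambda=(2^a1^b)$ with $2a+b=2g$, whose multiplicity spaces $S_{\lambda'}(\bQ^2)$ have dimension $b+1$, which is unbounded, and in any case the algebraic classes do not live inside these multiplicity spaces. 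The rank $2$ pieces in the paper come instead from the CM decomposition: writing $B=\End(A)\otimes\bQ$ and $L$ for its Galois closure, Ancona's construction splits $[H^1(A)]$ into rank one motives $M_\sigma$ with coefficients in $L$, indexed by $\sigma\in\Hom(B,L)$, and then $[H^{2g}(A\times A)]$ splits over $\bQ$ into pieces $M_{[I^2]}$ indexed by \emph{Galois orbits}; the rank $2$ property of the exotic piece is exactly the statement that the relevant Galois orbit has two elements. Establishing this requires locating the exotic Tate classes, and that is the step your proposal never performs: it needs the \emph{angle rank}, i.e., the structure of the group of multiplicative relations among $\beta_i=q/\alpha_i^2$. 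Tankeev \cite{Tankeev:prime} (angle rank $g-1$ or $g$ for odd prime $g$) and Lenstra--Zarhin \cite{LZ} (almost ordinary case), combined with the lemma that the only relation has the form $(\beta_1\cdots\beta_g)^N=1$, are what force exotic classes into the middle degree and into a two-dimensional space. Knowing only that $\End(A)\otimes\bQ$ is a CM field of degree $2g$ constrains none of this --- which is precisely why the paper's Theorem \ref{main:general} is stated in terms of angle rank rather than endomorphism algebras.

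There are also gaps on the lifting side. You fix an arbitrary lift of $A$ to mixed characteristic at the outset, but Theorem \ref{rank 2} must be applied to a relative Chow motive over $O_K$ cutting out exactly the exotic classes in the special fibre; the projectors defining $M_{[I^2]}$ are built from the $B$-action, so one needs a \emph{CM lift}, i.e., an abelian scheme $\cA$ over $O_K$ (available only after a $B$-isogeny and an extension of $\bF_q$) to which all of $B$ extends. Moreover, the CM lift carries its own polarization, so one needs the independence statement of Lemma \ref{independence} (valid because exotic classes occur only in the middle degree) to conclude the conjecture for an arbitrary polarization. Finally, for $A\times E$ with $E$ supersingular the space of exotic classes is four-dimensional, not two, so Ancona's theorem does not apply directly; the paper splits $M_{[I]}\otimes H^1(E)$ into two rank $2$ pieces using an embedding of an imaginary quadratic field $F\subset B$ into $\End(E_{\overline{\bF}_q})\otimes\bQ$, and lifts $E$ together with its $F$-action. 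Your appeal to the K\"unneth decomposition does not address this case at all.
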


As in \cite{Ancona}*{1.6}, this combined with \cite{Clozel} implies

\begin{cor}
The numerical equivalence on $A\times A, A\times E$ coincides with the $\ell$-adic homological equivalence on $A\times A, A\times E$ for infinitely many $\ell$. 
\end{cor}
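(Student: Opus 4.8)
The plan is to deduce the Corollary formally from Theorem~\ref{main:example} together with Clozel's theorem \cite{Clozel}, along the lines of \cite{Ancona}*{1.6}. Write $X$ for $A\times A$ or for $A\times E$. Clozel's theorem lives over finite fields, so I treat that case first; when $k$ is already finite, Theorem~\ref{main:example} applies to $X$ directly and no reduction is needed. In general one spreads $X$ out over a finitely generated base and specializes to a closed point: the specialization homomorphism on Chow groups is compatible with the cycle class maps and with the intersection pairing, so it suffices to establish the equality of numerical and $\ell$-adic homological equivalence on a suitable special fibre $X_0$ over a finite field $\bF_q$, together with the positivity needed to feed into \cite{Clozel} there. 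Matching this positivity across the specialization is the one genuinely delicate point, and I return to it at the end.

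The concrete reformulation I would record is the following. Fix a codimension $i$, set $d=\dim X_0$, and let $A^i_\ell\subseteq H^{2i}(X_{0,\ov k},\bQ_\ell(i))$ be the image of the cycle class map; the cup product restricts on algebraic classes to a $\bQ$-valued pairing $A^i_\ell\times A^{d-i}_\ell\to\bQ$ computing intersection numbers. Its radical on $A^i_\ell$ is exactly the set of classes of numerically trivial cycles, that is, the kernel of the surjection $A^i_\ell\twoheadrightarrow Z^i_{\num}(X_0)$. Hence numerical and $\ell$-adic homological equivalence coincide in codimension $i$ precisely when this pairing is non-degenerate, and this is what has to be shown for infinitely many $\ell$.

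The structural inputs over $\bF_q$ are that $\Frob$ acts semisimply on the cohomology of an abelian variety, with characteristic polynomial on each $H^{2i}$ lying in $\bQ[t]$ independently of $\ell$ (Weil), and that by Jannsen the $\bQ$-algebra $B$ of numerical correspondences on $X_0$ is finite-dimensional and semisimple. The numerical Hodge standard conjecture supplied by Theorem~\ref{main:example} says precisely that the canonical involution on $B$ coming from a polarization is \emph{positive}. With this input, Clozel's theorem shows that the $\ell$-adic cycle class map on numerical cycles of $X_0$ is injective in every codimension, equivalently that the pairings above are non-degenerate, for a set of primes $\ell$ of positive density, and in particular for infinitely many $\ell$: the positivity pins down the splitting behaviour at $\ell$ of the simple factors of $B$ and forces, for such $\ell$, the $\ell$-adic realization to detect every numerical class. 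This gives the Corollary for $A\times A$ and for $A\times E$.

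The part I expect to demand the most care is bookkeeping rather than a new idea. One must check that the specialization reduction carries over the precise polarized $\bQ$-algebra structure to which \cite{Clozel} applies: in the prime-dimensional case a specialization of $A$ need not remain simple, so the transported positivity must be read off at the level of the numerical correspondence algebra rather than of $A$ itself, and one must confirm that it is genuinely the full algebra $B$ of $X_0$ (not merely the subalgebra coming from the generic fibre) that inherits positivity. Finally, if one wants a single set of primes $\ell$ working for both $A\times A$ and $A\times E$ simultaneously, the positive-density statement of \cite{Clozel} is used to guarantee that the intersection of the two resulting sets of primes is still infinite.
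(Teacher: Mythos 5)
Your proposal follows the paper's own (deliberately terse) proof: the corollary is obtained exactly by feeding the numerical Hodge standard conjecture of Theorem~\ref{main:example} into Clozel's theorem \cite{Clozel}, as in \cite{Ancona}*{1.6}, and your elaborations (reduction to a finite field by spreading out, and the reformulation of $\mathrm{hom}_\ell=\mathrm{num}$ as nondegeneracy of the $\bQ$-valued intersection pairing on the image of the cycle class map) are consistent with that route. One caveat worth correcting: the numerical Hodge standard conjecture for $X$ is \emph{not} ``precisely'' the positivity of the canonical involution on the algebra of numerical correspondences of $X$ --- that positivity amounts to a Hodge-standard statement for $X\times X$ (here $A^4$), which is not available --- but Clozel's theorem takes as input exactly the codimension-wise positivity on primitive numerical classes of $X$ itself that Theorem~\ref{main:example} supplies, so the deduction goes through as in the paper.
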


\begin{rem}
Assume that $k$ is a finite field. 
In both cases, the Tate conjecture for $A$ is known, and, all algebraic classes come from the intersection of divisors \cites{Tate, Tankeev:prime, LZ}. Therefore, the numerical Hodge standard conjecture holds for $A$ itself and the numerical equivalence on $A$ coincides with the $\ell$-adic homological equivalence on $A$ for every $\ell$ \cite{Milne:polarization}*{3.7}, \cite{Ancona}*{Section 5}. 
However, if $\dim A\geq 3$, $A^2$ and $A \times E$ may have an exotic Tate class in the middle degree, i.e., a class that cannot be written using Tate classes of degree 2, and the Tate conjecture is not known except the case of the product of a simple threefold and an ordinary elliptic curve \cite{Milne2022}\footnote{This relies on the work of Markman \cite{Markman} on Weil classes.}. 
\end{rem}

We prove a slightly more general statement. Let $A$ be an absolutely simple abelian variety of dimension $g$ over a finite field $\bF_q$. Let $\alpha_1, \dots, \alpha_{2g}$ be the Frobenius eigenvalues of the first cohomology so that $\overline{\alpha}_i=\alpha_{i+g}$. 
Set $\beta_{i}\coloneqq q/ \alpha_i^2, 1\leq i\leq g$. Let $\Gamma'$ denote the multiplicative group generated by $\beta_i, 1\leq i \leq g$ inside $\bQ(\alpha_1, \dots, \alpha_{2g})$. The rank of $\Gamma'$ has been studied, e.g., \cites{Zarhin:nonsimple, Zarhin:eigenvalues}. Following \cite{DKZB}, we call it the \emph{angle rank} of $A$. The angle rank is always less than or equal to $g$. If the angle rank is $g$ or $A$ is a supersingular elliptic curve\footnote{The angle rank is $0=g-1$ in this case.}, all the Tate classes on $A^n$ for a positive integer $n$ can be written using Tate classes of degree 2, and the Tate conjecture holds for $A^n$.  
(This is the case for all abelian surfaces and elliptic curves.)
The converse is also true. 
Recall that such a Tate class is called \emph{Lefschetz} and a Tate class is \emph{exotic} if it is not Lefschetz. 
We are interested in the easiest case with possible exotic Tate classes:

\begin{thm}\label{main:general}
If the angle rank of $A$ is $g-1$ or $g$ and $\dim A >1$ is odd, then the numerical Hodge standard conjecture holds for $A\times A$ and $A\times E$, where $E$ is an elliptic curve. 
\end{thm}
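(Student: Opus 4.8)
\emph{Proof proposal.} Since the Hodge standard conjecture is classical over $\mathbf{C}$, where it follows from the Hodge--Riemann bilinear relations, and since Theorem~\ref{main:general} is phrased directly over $\mathbf{F}_q$, my plan is to decompose the intersection form on the algebraic cycles according to the Frobenius action and to apply Ancona's criterion \cite{Ancona}*{8.1} to the pieces carrying exotic classes. I would first decompose $h(A\times A)=h(A)^{\otimes 2}$ and $h(A\times E)=h(A)\otimes h(E)$ into Frobenius-isotypic summands, indexed by the monomials in the normalized eigenvalues $\beta_i$; the primitive intersection form is block-diagonal for this decomposition. On the Lefschetz part---the subalgebra generated by divisor classes, corresponding to the trivial relations $\beta_i\beta_i^{-1}=1$---positivity is the standard conjecture of Hodge type for Lefschetz classes, known via the hard Lefschetz theorem for abelian varieties and the positivity of the Rosati involution (cf.\ \cite{Ancona}*{Section 5}, \cite{Milne:polarization}). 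The entire problem therefore concentrates on the summands supporting exotic classes.

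When the angle rank equals $g$ there are no such summands, since every Tate class on $A^n$ is then Lefschetz as recalled in the introduction, and the same analysis applied to the joint eigenvalue relations of $A$ and $E$ disposes of $A\times E$; Theorem~\ref{main:general} holds in this case by the previous paragraph. The substantive case is angle rank $g-1$, where the group of multiplicative relations among the $\beta_i$ exceeds the Lefschetz relations by rank one. I would fix a primitive generator of this extra relation; it produces a line of exotic Tate classes concentrated in a single bidegree. Here the hypothesis that $g$ is odd enters decisively: it guarantees that these exotic classes occur in the middle dimension---for $A\times E$, the middle codimension $(g+1)/2$ is an integer precisely when $g$ is odd---so that the relevant Hodge standard pairing carries no power of the polarization, and it constrains the primitive exotic summand to be governed by a single \emph{rank 2} motive, of exactly the type to which \cite{Ancona}*{8.1} applies. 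For $A\times E$ the elliptic factor contributes its own rank 2 motive $h^1(E)$, and I would check that the combined exotic summand is again controlled by one rank 2 motive.

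With this reduction in hand I would verify the hypotheses of \cite{Ancona}*{8.1}, exhibiting the required model in mixed characteristic from a CM lift of $A$ to characteristic zero, so that the criterion delivers positivity of the intersection form on the exotic line. The main obstacle is the sign condition in that criterion. After comparing the archimedean realization of the characteristic-zero lift, where Hodge theory dictates the sign, with the realization over $\mathbf{F}_q$, the required positivity reduces to a single inequality relating the Hodge numbers of the lift to the $p$-adic valuations of the Frobenius eigenvalues entering the fixed relation, that is, to the slopes of the Newton polygon. The difficulty is to show that the local contribution at $p$ does not reverse the sign prescribed by the Hodge structure, and this is where the odd-dimensionality and the explicit shape of the generating relation must be exploited. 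In the almost-ordinary situation of Theorem~\ref{main:example} the Newton polygon is explicit enough that this inequality can be read off directly, which is precisely what makes that case accessible by the same method.
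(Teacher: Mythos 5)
Your overall skeleton---isotypic decomposition of the motive, reduction to the exotic summands, a CM lift, and Ancona's rank-2 criterion---is indeed the paper's strategy, but two of your steps are wrong in substance. First, you dismiss the angle-rank-$g$ case by asserting that the analysis of joint eigenvalue relations ``disposes of $A\times E$'' so that the theorem follows from positivity on Lefschetz classes alone. It does not: when $E$ is \emph{ordinary}, its normalized eigenvalue $q/\alpha_E^2$ is not a root of unity and may satisfy a multiplicative relation with $\beta_1,\dots,\beta_g$ even though the $\beta_i$ themselves are independent, so $A\times E$ can carry exotic Tate classes in the middle degree. The paper treats exactly this possibility with the same CM-lift machinery (``a similar construction makes sense for $A\times E$ if $g$ is odd, the angle rank is $g$, and $E$ is ordinary''); only supersingular $E$ makes the angle-rank-$g$ case trivial. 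Second, in the angle-rank-$(g-1)$ case with $E$ supersingular, the exotic summand $M_{[I]}\otimes H^1(E)$ has rank $4$ and the space of exotic Tate classes is four-dimensional---not a ``line,'' and not ``controlled by one rank 2 motive''---so Theorem \ref{rank 2} cannot be applied to it directly. The paper splits it as $M_1\oplus M_2$, orthogonally for $\langle -,-\rangle^{\otimes g+1}_{1,\textnormal{mot}}$, using an imaginary quadratic field $F\subset B$ coming from the Galois action on $[I]$, which also embeds into $\End(E_{\overline{\bF}_q})\otimes\bQ$, and then lifts $E$ together with its $F$-action. This splitting is a genuinely missing idea in your proposal.

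Your ``main obstacle''---a sign condition comparing Hodge numbers of the lift with slopes of the Newton polygon---does not exist in the argument. Theorem \ref{rank 2} is applied as a black box; its hypotheses are only that $\dim_{\bQ}V_B=\dim_{\bQ}V_Z=2$ and that $q_B$ polarizes the Hodge structure. The two-dimensionality on both sides comes from the key lemma that, after relabeling, the only relation among the $\beta_i$ is $(\beta_1\cdots\beta_g)^N=1$ (a Galois-theoretic argument your proposal never supplies, and the real reason oddness of $g$ pushes exotic classes into the middle degree: your integrality remark about $(g+1)/2$ says nothing for $A\times A$, whose middle codimension $g$ is always an integer), so that the relevant Galois orbit $[I^2]$ has exactly two elements; the polarization condition holds by the construction of the pairing on the lifted CM motive. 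Any comparison of signs at $p$ is internal to Ancona's proof of his criterion, not a hypothesis to re-verify; correspondingly, almost-ordinariness in Theorem \ref{main:example} enters only through the Lenstra--Zarhin computation of the angle rank, not through a Newton-polygon inequality. Note finally that the lifted exotic classes have Hodge type $(2a,2b),(2b,2a)$ with $a+b=g$, hence never $(g,g)$: they are not algebraic in characteristic zero, so the classical Hodge--Riemann relations you invoke at the outset can never be applied to them directly, which is precisely why Ancona's theorem is essential; and one also needs Lemma \ref{independence} in order to be free to work with a polarization that lifts to $O_K$.
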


\begin{rem}
Tankeev \cite{Tankeev:prime}*{p.332} showed that the angle rank is $g-1$ or $g$ if $g=\dim A$ is an odd prime\footnote{Tankeev excludes the case $g=3$, but the same argument actually works.}. 
Lenstra and Zarhin \cite{LZ} showed that that if $A$ is almost ordinary, the angle rank is $g-1$ when $g$ is odd and $g$ when $g$ is even; see \cite{LZ}*{6.7} (and \cite{DKZB}*{1.5}) for a slightly more general case. 
\end{rem}

\begin{rem}
If the angle rank of $A$ is $g-1$ and $E$ is ordinary, then $A\times E$ has no exotic Tate classes; see Corollary \ref{ordinary}. The same holds trivially if the angle rank of $A$ is $g$ and $E$ is supersingular. 
\end{rem}

\begin{rem}\label{even}
If we assume instead that $g$ is even and the angle rank is $g-1$ (or $g$), then we can show that the numerical Hodge standard conjecture holds for $A$ itself. This partly generalizes the case of absolutely simple abelian fourfolds in \cite{Ancona} because one can show that the angle rank is $\geq 3$ for an absolutely simple abelian fourfold if its Frobenius generates a CM field of degree $8$. 
\end{rem}

Now, Theorem \ref{main:general} clearly implies Theorem \ref{main:example}, so we will focus on Theorem \ref{main:general}.  
We shall show that $A\times A$ may have an exotic Tate class only in the middle degree, and they form a 2-dimensional space so that we can apply \cite{Ancona}*{8.1}. The case of $A\times E$ is similar. 

Finally, let us mention that we study the Tate conjecture and the Hodge standard conjecture for self-products of K3 surfaces in \cite{IIK}. 

\section{A lemma on the Hodge standard conjecture}
Let $A$ be an abelian variety of dimension $g$ over a field with a polarization $L$. 
Let $\cZ^{n}_{\num}(A)_{\bQ}$ denote the space of algebraic cycles of codimension $n$ modulo numerical equivalence.  
Recall that the Lefschetz standard conjecture holds for $A$ and $L$, and we define the primitive part $\cZ^{n, \prim}_{\num}(A)_{\bQ}$ of $\cZ^{n}_{\num}(A)_{\bQ}$. 

\begin{conj}[The numerical Hodge standard conjecture]
For a nonnegative integer $n\leq g/2$, 
The pairing
\[
\langle -, -\rangle_n \colon \cZ^{n, \prim}_{\num}(A)_{\bQ} \times \cZ^{n, \prim}_{\num}(A)_{\bQ} \to \bQ; (\alpha, \beta) \mapsto
(-1)^{n} \alpha \cdot \beta \cdot L^{g-2n}
\]
is positive definite. 
\end{conj}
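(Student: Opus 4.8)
The conjecture is a positivity statement, so the plan is to reduce it to a form in which positivity can be checked one graded piece at a time. First I would invoke the Lefschetz standard conjecture for abelian varieties (Lieberman, Kleiman), which gives hard Lefschetz and the primitive decomposition $\cZ^{n}_{\num}(A)_{\bQ}=\bigoplus_{j\geq 0} L^{j}\,\cZ^{n-j,\prim}_{\num}(A)_{\bQ}$ with the primitive summands mutually orthogonal for the relevant intersection pairings. Positive definiteness of $\langle-,-\rangle_n$ on each $\cZ^{n,\prim}_{\num}(A)_{\bQ}$ is then all that remains, and over $\bC$ this is precisely the Hodge--Riemann bilinear relations; the entire content lies in positive characteristic, i.e.\ for $A$ over a finite field $\bF_q$. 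There I would pass from cycles modulo numerical equivalence to their $\ell$-adic classes, using that numerical and $\ell$-adic homological equivalence coincide for $A$ itself (as recalled above), so that the problem becomes a computation with Tate classes together with the positivity data of the polarization.

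Next I would make $\cZ^{\bullet}_{\num}(A\times A)_{\bQ}$ (and $\cZ^{\bullet}_{\num}(A\times E)_{\bQ}$) explicit through the K\"unneth decomposition and the action of the endomorphism algebra. The arithmetic heart is the angle-rank hypothesis: for $A$ of angle rank $g-1$ or $g$ with $g>1$ odd, an analysis of the multiplicative relations among the Frobenius eigenvalues $\alpha_i$ --- equivalently among the $\beta_i=q/\alpha_i^2$ --- should show that every Tate class is Lefschetz outside the middle degree, that the Lefschetz classes are generated by divisors, and that the exotic classes in the middle degree span only a $2$-dimensional plane. Pinning down this plane, and ruling out exotic classes elsewhere, is where the parity of $g$ and the precise value $g-1$ of the angle rank enter.

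With this description in hand, the Lefschetz part of each primitive piece should satisfy the required positivity by the Hodge index theorem and the positivity of the Rosati form attached to $L$ (Milne's polarization results \cite{Milne:polarization}), reducing that part to codimension one. The remaining $2$-dimensional exotic plane in the middle degree I would package as a rank $2$ pure motive and hand to Ancona's general criterion \cite{Ancona}*{8.1}; the $A\times E$ case runs in parallel. The step I expect to be the main obstacle is verifying the hypotheses of \cite{Ancona}*{8.1} for this exotic motive --- producing the required lift to mixed characteristic and determining the sign of the pairing on the exotic plane, so that Ancona's archimedean and $p$-adic positivity input actually applies --- together with the prerequisite that no exotic Tate class escapes the middle degree, since any such class would lie outside the scope of the rank $2$ criterion.
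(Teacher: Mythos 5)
The statement you were asked to prove is the \emph{conjecture} itself, stated for an arbitrary polarized abelian variety $A$ over an arbitrary field and for every $n\leq g/2$. The paper does not prove this statement, and no proof of it exists: it is an open conjecture, and the paper's contribution (Theorems \ref{main:example} and \ref{main:general}) is to establish it only for $A\times A$ and $A\times E$ under strong extra hypotheses (absolute simplicity, odd dimension, angle rank $g-1$ or $g$). Your proposal silently imports exactly those hypotheses --- midway through you restrict to ``$A$ of angle rank $g-1$ or $g$ with $g>1$ odd'' and to the products $A\times A$, $A\times E$ --- so what you have outlined is a proof strategy for Theorem \ref{main:general}, not for the conjecture as stated. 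That is a genuine and unfixable gap, not a presentational one: for a general abelian variety over a finite field nothing confines exotic classes to the middle degree, nothing makes the exotic part $2$-dimensional, and Ancona's criterion \cite{Ancona}*{8.1} (which is intrinsically a rank $2$ statement) simply does not apply; no method is known to handle, say, a high-dimensional space of exotic classes spread across several degrees.

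Within the special cases, your outline does track the paper's actual argument fairly closely: the reduction of the Lefschetz (divisor-generated) part to known positivity via specialization to a finite field \cite{Milne:polarization}, the observation that only the exotic middle-degree plane remains (the paper's Lemma \ref{independence} and the angle-rank analysis of Section 3), and the final appeal to \cite{Ancona}*{8.1} after a CM lifting to mixed characteristic. Two points where the paper's execution differs from your sketch: it does not pass through $\ell$-adic classes and a Tate-conjecture-style identification of numerical with homological equivalence on the product (the Tate conjecture for $A\times A$ is precisely what is \emph{not} known here); instead it works throughout with Chow motives with coefficients, decomposing $[H^g(A)]$ into CM summands $M_{[I]}$, $M_{[I^2]}$ following \cite{Ancona}*{Section 6}, so that the exotic plane is realized as the algebraic classes of a single rank $2$ motive that visibly lifts over $O_K$. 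And in the supersingular $A\times E$ case the exotic space is $4$-dimensional, not $2$-dimensional; the paper must further split $M_{[I]}\otimes H^1(E)$ into two rank $2$ pieces using an imaginary quadratic field acting on both factors before \cite{Ancona}*{8.1} can be invoked --- a step absent from your proposal.
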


Let us say that a class in $\cZ^n_{\num}(A)_{\bQ}$ is \emph{exotic} if it cannot be written as the intersection of divisors. 

\begin{lem}\label{independence}
If $A$ has exotic classes only in the middle degree, then the numerical Hodge standard conjecture is independent of $L$. 
\end{lem}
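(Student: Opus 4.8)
The plan is to reduce everything to the middle degree, where the defining pairing of the conjecture does not involve $L$ at all, and then to control the remaining $L$-dependence — which sits only in the primitive subspace — by a signature computation.

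Write $d=\dim A$, let $D^{\bullet}\subseteq\cZ^{\bullet}_{\num}(A)_{\bQ}$ be the subalgebra generated by divisor classes, and recall that the Lefschetz standard conjecture supplies hard Lefschetz and the $L$-orthogonal Lefschetz decomposition on $\cZ^{\bullet}_{\num}(A)_{\bQ}$. By hypothesis $\cZ^{n}_{\num}(A)_{\bQ}=D^{n}$ for every $n\neq d/2$; in particular a middle degree only exists when $d$ is even, and it is the unique degree $n=d/2$ where exotic classes can occur. First I would dispose of the degrees $n<d/2$: there $\cZ^{n}_{\num}(A)_{\bQ}=D^{n}$ consists of Lefschetz classes, so the conjecture in degree $n$ is exactly the Hodge standard conjecture for the subalgebra generated by divisors. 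The latter is known — it is the Hodge--Riemann positivity for the $\mathfrak{sl}_2$-structure generated by an ample class — and, crucially, it holds for \emph{every} polarization. Hence in all degrees below the middle the conjecture holds unconditionally, for all $L$, and contributes no $L$-dependence.

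Next I would isolate the middle degree $n=d/2$. Here $d-2n=0$, so the pairing of the conjecture is simply $Q(\alpha,\beta)\coloneqq(-1)^{d/2}\,\alpha\cdot\beta$ on the fixed space $V\coloneqq\cZ^{d/2}_{\num}(A)_{\bQ}$; this $Q$ is nondegenerate by Poincar\'e duality and, decisively, does not depend on $L$. The only $L$-dependence left is in the primitive subspace $\cZ^{d/2,\prim}_{\num}(A)_{\bQ}=\ker\bigl(L\cdot\colon V\to\cZ^{d/2+1}_{\num}(A)_{\bQ}\bigr)$, and the conjecture in the middle degree asserts that $Q$ is positive definite on it. The Lefschetz decomposition gives the $Q$-orthogonal splitting $V=\cZ^{d/2,\prim}_{\num}(A)_{\bQ}\oplus L\cdot\cZ^{d/2-1}_{\num}(A)_{\bQ}$, so the signatures add:
\[
\operatorname{sig} Q=\operatorname{sig}\bigl(Q|_{\cZ^{d/2,\prim}_{\num}(A)_{\bQ}}\bigr)+\operatorname{sig}\bigl(Q|_{L\cdot\cZ^{d/2-1}_{\num}(A)_{\bQ}}\bigr).
\]
The left-hand side is $L$-independent since $Q$ is. Under the isomorphism $L\cdot\colon\cZ^{d/2-1}_{\num}(A)_{\bQ}\xrightarrow{\sim}L\cdot\cZ^{d/2-1}_{\num}(A)_{\bQ}$ the second summand is, up to an overall sign, the degree-$(d/2-1)$ intersection form $\alpha,\beta\mapsto(-1)^{d/2-1}\alpha\cdot\beta\cdot L^{2}$; by the already-established (and $L$-independent) conjecture for Lefschetz classes in that degree, its signature is the definite alternating sum of the primitive dimensions, again $L$-independent. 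Therefore $\operatorname{sig}\bigl(Q|_{\cZ^{d/2,\prim}_{\num}(A)_{\bQ}}\bigr)$ is $L$-independent, and since $\dim\cZ^{d/2,\prim}_{\num}(A)_{\bQ}=\dim\cZ^{d/2}_{\num}(A)_{\bQ}-\dim\cZ^{d/2-1}_{\num}(A)_{\bQ}$ is $L$-independent by hard Lefschetz, positive definiteness — i.e.\ signature equal to the full dimension — holds for one polarization if and only if it holds for all.

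The main obstacle I anticipate is the input I used for the low degrees: that the Hodge standard conjecture holds, for every polarization, on the subalgebra generated by divisor classes. This is what lets me treat the contribution $\operatorname{sig}\bigl(Q|_{L\cdot\cZ^{d/2-1}_{\num}(A)_{\bQ}}\bigr)$ as a fixed number rather than something that might itself jump with $L$, and it is exactly the place where the hypothesis ``exotic classes only in the middle degree'' is used. I would settle it by the classical Hodge--Riemann relations for the Lefschetz (divisor-generated) motive, which give the required positivity for any ample class; the remaining points — that hard Lefschetz and the orthogonal primitive decomposition are available for numerical cycles, and that the relevant ranks are independent of $L$ — are furnished by the Lefschetz standard conjecture recalled above.
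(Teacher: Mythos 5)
Your proof is correct, but it takes a genuinely different route from the paper's. The paper works with the $L$-independent middle intersection form $Q(\alpha,\beta)=(-1)^{g/2}\alpha\cdot\beta$ and splits $\cZ^{g/2}_{\num}(A)_{\bQ}=\cL^{g/2}_{\num}(A)_{\bQ}\oplus\cE^{g/2}_{\num}(A)_{\bQ}$ as the divisor-generated part plus its $Q$-orthogonal complement (the exotic part); since $L\cdot\cZ^{g/2-1}_{\num}(A)_{\bQ}\subseteq\cL^{g/2}_{\num}(A)_{\bQ}$, the exotic part sits inside the primitive subspace for \emph{every} polarization, positivity on the Lefschetz-primitive part is known, and so the conjecture becomes equivalent to positivity of $Q$ on the canonical, $L$-independent subspace $\cE^{g/2}_{\num}(A)_{\bQ}$. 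You never construct this subspace: instead you run a signature bookkeeping argument over the ($L$-dependent) primitive decomposition $\cZ^{d/2}_{\num}(A)_{\bQ}=\cZ^{d/2,\prim}_{\num}(A)_{\bQ}\perp L\cdot\cZ^{d/2-1}_{\num}(A)_{\bQ}$, observing that the total signature of $Q$ and the signature of its restriction to $L\cdot\cZ^{d/2-1}_{\num}(A)_{\bQ}$ (computed via Hodge--Riemann on the divisor algebra together with hard-Lefschetz dimension counts) are both $L$-independent, so the signature on the primitive part is forced to be $L$-independent as well, and positivity is the statement that this signature equals the ($L$-independent) dimension. Both arguments consume exactly the same external inputs---positivity for divisor-generated classes under any polarization via specialization to a finite field, and the Lefschetz standard conjecture for abelian varieties---so neither is more economical in hypotheses. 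What the paper's version buys is the explicit $L$-independent exotic subspace, which is precisely the object the rest of the paper manipulates (it gets matched with the CM motives $M_{[I^2]}$ and fed into Ancona's rank-2 theorem); what your version buys is that it makes fully explicit the step the paper leaves implicit (why the conjecture is equivalent to a statement about a fixed subspace even though the primitive subspace moves with $L$), at the cost of being purely numerical and yielding no distinguished subspace to work with downstream.
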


\begin{proof}
Let $\cL^{n}_{\num}(A)_{\bQ}$ denote the subspace of $\cZ^n_{\num}(A)_{\bQ}$ spanned by the intersections of divisors. 
The numerical Hodge standard conjecture is known for $\cL^{n}_{\num}(A)_{\bQ}$ by specializing to a finite field \cite{Milne:polarization}*{3.7}, \cite{Ancona}*{Section 5}. 
In particular, only the middle degree is a problem. There is an orthogonal decomposition with respect to $\langle -,- \rangle_n$
\[
\cZ^{g/2}_{\num}(A)_{\bQ}=\cL^{g/2}_{\num}(A)_{\bQ}\oplus \cE^{g/2}_{\num}(A)_{\bQ}, 
\]
where $\cE^{g/2}_{\num}(A)_{\bQ}$ is the space of exotic classes. This decomposition is independent of $L$. 
The numerical Hodge conjecture holds for $A$ and $L$ if and only if $\langle -, - \rangle_{g/2}$ is positive definite on $\cE^{g/2}_{\num}(A)_{\bQ}$, and the latter statement is independent of $L$.   
\end{proof}

\section{Exotic Tate classes}
We assume that $A$ is an absolutely simple abelian variety of dimension $g >2$ defined over a finite field $\bF_q$ of characteristic $p$. 
We use the notation $\alpha_i, \beta_i$ as in the introduction. 
Suppose first that the angle rank of $A$ is $g-1$. This implies that $\End (A)\otimes \bQ$ is a number filed of degree $2g$ generated by Frobenius. 

\begin{lem}
Assume that the angle rank is $g-1$. 
After replacing $\alpha_i$ by $\alpha_{i+g}$ if necessary, the only relation among $\beta_1, \dots \beta_g$ has the form of
\[
(\beta_1 \cdots \beta_g)^N = 1
\]
for some $N$. 
\end{lem}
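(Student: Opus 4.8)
The plan is to exploit the Galois action on the angles together with the fact that they lie on the unit circle. First I would rewrite $\beta_i=q/\alpha_i^2=\ov\alpha_i/\alpha_i$, so that each $\beta_i$ has absolute value $1$ and $\beta_{i+g}=\beta_i^{-1}$; hence the full Galois orbit of $\beta_1$ is $\{\beta_1^{\pm1},\dots,\beta_g^{\pm1}\}$ and $\Gamma'$ is generated by this single orbit. Since the angle rank is $g-1$, the hypothesis recalled above that $\End(A)\otimes\bQ=\bQ(\pi)$ is a field of degree $2g$ generated by Frobenius $\pi$ shows that the characteristic polynomial of $\pi$ is irreducible of degree $2g$. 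Letting $L$ be its splitting field and $G=\Gal(L/\bQ)$, the group $G$ acts \emph{transitively} on the $2g$ conjugates $\alpha_1,\dots,\alpha_{2g}$, hence on the $2g$ angles.

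Next I would package the relations representation-theoretically. Write $W:=\Gamma'\otimes\bQ$, a $\bQ[G]$-module of dimension equal to the angle rank $g-1$, and let $V=\bQ^{2g}$ be the transitive permutation representation on the conjugates, with basis $f_1,\dots,f_{2g}$. Sending $f_i$ to the class of $\beta_i^{-1}$ defines a $G$-equivariant surjection $V\twoheadrightarrow W$. Because $\bQ(\pi)$ is a CM field, complex conjugation $c$ is central in $G$ and interchanges $f_i\leftrightarrow f_{i+g}$; since $|\beta_i|=1$ forces $c$ to act on $W$ by $-1$, the map kills the $(+1)$-eigenspace $V^+=\langle f_i+f_{i+g}\rangle$ and restricts to a surjection from the $g$-dimensional $(-1)$-eigenspace $V^-=\langle f_i-f_{i+g}\rangle$ onto $W$. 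Its kernel $\ell$ is therefore a $1$-dimensional subspace, and writing a generator as $w=\sum_{i=1}^{g}a_i(f_i-f_{i+g})$, the condition $w\in\ell$ reads $\sum_i a_i[\beta_i]=0$ in $W$; that is, $(a_1,\dots,a_g)$ is precisely the exponent vector of the unique primitive multiplicative relation among $\beta_1,\dots,\beta_g$.

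The crux is then that $\ell$ is a $G$-stable line (it is the intersection of the $G$-stable spaces $\ker(V\to W)$ and $V^-$), so $G$ acts on it through a character $\chi\colon G\to\{\pm1\}$. Viewing the generator as $w=\sum_{i=1}^{2g}a_i f_i$ with $a_{i+g}=-a_i$, the eigenvector equation $\sigma w=\chi(\sigma)w$ gives $a_{\sigma^{-1}(j)}=\chi(\sigma)a_j$, whence $|a_{\sigma^{-1}(j)}|=|a_j|$ for every $\sigma$. By transitivity of $G$ all the $|a_i|$ coincide, so after scaling the rational generator we may take $a_i\in\{\pm1\}$. Thus every exponent in the relation among $\beta_1,\dots,\beta_g$ is $\pm1$; replacing $\alpha_i$ by $\alpha_{i+g}$ (equivalently $\beta_i$ by $\beta_i^{-1}$) at the indices with exponent $-1$ turns the relation into $\beta_1\cdots\beta_g$ being a root of unity, i.e. $(\beta_1\cdots\beta_g)^N=1$ with $N$ its order.

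I expect the main obstacle to be the bookkeeping of the second paragraph: checking that $c$ is central (the CM property of $\bQ(\pi)$ and its Galois closure), so that $V^\pm$ are $G$-stable, and that the obvious relations $\beta_i\beta_{i+g}=1$ fill out the entire $(+1)$-part, so that the relation among the chosen half $\beta_1,\dots,\beta_g$ is \emph{exactly} the $1$-dimensional $G$-stable line $\ell$ and nothing larger. Once the angle-rank hypothesis pins the kernel dimension at $1$, the character argument is immediate; this rank-$1$ input is essential, since otherwise $\ell$ could carry a higher-dimensional representation and the coefficients need not share a common absolute value.
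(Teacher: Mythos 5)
Your proof is correct and takes essentially the same route as the paper: both arguments rest on the fact that the angle-rank hypothesis makes the relation space one-dimensional, that the (finite) Galois group must therefore act on it through $\{\pm 1\}$, and that transitivity of the Galois action on the $\beta_i^{\pm 1}$ forces all exponents of a generator to share the same absolute value, with complex conjugation supplying $\overline{\beta}_i=\beta_i^{-1}$. Your representation-theoretic packaging (the permutation module $V$ and the $(-1)$-eigenspace of the central complex conjugation) is just a $\bQ$-linear reformulation of the paper's direct argument on the rank-one kernel lattice $\Gamma_1$.
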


\begin{proof}
Let $\beta_1^{\bZ} \cdots \beta_g^{\bZ}$ denote the free abelian group of rank $g$ with the basis $\beta_1, \dots, \beta_g$, and let $\Gamma_1$ be the kernel of the natural map
\[
\beta_1^{\bZ} \cdots \beta_g^{\bZ} \to \bQ(\alpha_1, \dots, \alpha_{2g})\setminus\{0\}. 
\]
By assumption, $\Gamma_1$ is a free abelian group of rank 1. 
So, the Galois group of $\bQ(\alpha_1, \dots, \alpha_{2g})$ acts naturally on $\Gamma_1$ via $\{\pm 1\}\subset \Aut (\Gamma_1)$. Note that the Galois group acts on $\{\{\beta_1^{\pm 1}\}, \dots, \{\beta_g^{\pm 1}\}\}$ by permutation and the action is transitive, and the Galois group contains the complex conjugation so that $\overline{\beta}_i=\beta_i^{-1}$. 
This implies that a generator of $\Gamma_1$ has the form of
\[
\beta_1^{\pm N} \beta_2^{\pm N} \cdots \beta_g^{\pm N}
\]
for some $N$. 
\end{proof}

\begin{cor}
Let $\ell$ be a prime different from $p$. 
If $g$ is odd (resp. even), any exotic $\ell$-adic Tate class of $A\times A$, $A\times E$ (resp. $A$) is in the middle degree. If an exotic Tate class exists, then the space of exotic Tate classes is two-dimensional for $A\times A$ (resp. $A$) and four-dimensional for $A\times E$. 
\end{cor}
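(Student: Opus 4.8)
The plan is to read off the $\ell$-adic Tate classes of each product directly from the Frobenius eigenvalues and to isolate the exotic ones using the angle relation of the preceding Lemma. Since Frobenius acts semisimply on $H^1$, for $A^n$ I would identify the Tate classes in $H^{2j}(A^n_{\ov{\bF}_q},\bQ_\ell)(j)\subset\bigwedge^{2j}(H^1)^{\oplus n}$ with the exponent vectors $(m_1,\dots,m_{2g})\in\{0,\dots,n\}^{2g}$ satisfying $\sum_i m_i=2j$ and $\prod_i\alpha_i^{m_i}\in q^j\cdot\mu_\infty$ (a root of unity times $q^j$; roots of unity are harmless since numerical equivalence is geometric, so one may pass to a finite extension of $\bF_q$), each occurring with multiplicity $\prod_i\binom{n}{m_i}$. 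Using $\alpha_{i+g}=\ov\alpha_i$ and $\alpha_i\alpha_{i+g}=q$, a short substitution rewrites the eigenvalue condition purely in terms of the angles: with $c_i:=m_i-m_{i+g}$ it becomes $\prod_{i=1}^g\beta_i^{c_i}\in\mu_\infty$. By the preceding Lemma the saturated relation lattice of $\beta_1,\dots,\beta_g$ is $\bZ\cdot(1,\dots,1)$, so this holds exactly when $c_1=\dots=c_g=:c$.

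Next I would separate the Lefschetz from the exotic classes. Every divisor class is a degree-$2$ Tate class with all $c_i=0$ (coming from the pairing $\alpha_i\alpha_{i+g}=q$), and $c$ is additive under cup product, so the span of the Lefschetz classes lies in the span of the exponent vectors with $c=0$; conversely, by the Tate conjecture for divisors \cite{Tate} together with the description of Lefschetz classes on abelian varieties, every $c=0$ Tate class is Lefschetz. Hence the exotic classes are precisely the Tate monomials with $c\neq0$. For $A\times A$ ($n=2$, $g$ odd) the degree is $2j=\sum_i(m_i+m_{i+g})=gc+2\sum_i m_{i+g}$; integrality forces $c$ even, and $0\le m_{i+g}\le 2-|c|$ then forces $c=\pm2$ with all $m_{i+g}=0$. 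This pins the degree to the middle one $2g$ and produces exactly the vector $(2,\dots,2,0,\dots,0)$ and its complex conjugate, each of multiplicity $1$: a two-dimensional exotic space, and nothing in any other degree. The case of $A$ itself with $g$ even is identical with $n=1$, where $c=\pm1$ is forced, the degree equals $g$, and the exotic space is again two-dimensional and only in the middle.

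For $A\times E$ I would run the same count with the extra tensor factor $H^\bullet(E)$, whose $H^1$ has eigenvalues $\gamma,\ov\gamma$; the condition becomes $\prod_i\beta_i^{c_i}\,\delta^{d}\in\mu_\infty$, where $\delta=\ov\gamma/\gamma$ and $d=p-\ov p$ records the $E$-exponents. Exotic classes ($c\neq0$, hence $c=\pm1$) force the $A$-part into $H^g(A)$, so they can occur only inside $H^g(A)\otimes H^1(E)$; since $g$ is odd, evenness of the total degree forces $|d|=1$, i.e.\ the middle degree $g+1$, and the four sign choices $(c,d)\in\{\pm1\}^2$ each give one class precisely when $\delta\in\mu_\infty$, that is when $E$ is supersingular. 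When $E$ is ordinary, $\delta$ has infinite order, $d=0$ is forced, the resulting degree is odd, and there are no exotic classes, recovering Corollary \ref{ordinary}. Thus, if exotic classes exist they fill the four-dimensional space $\langle P,\ov P\rangle\otimes H^1(E)$ with $P=\prod_{i\le g}\alpha_i$, all in the middle degree.

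The step I expect to be the main obstacle is the clean identification of the Lefschetz subspace with the $c=0$ monomials --- the assertion that every Tate class with $c=0$ is genuinely a polynomial in divisor classes --- which rests on the Tate conjecture for divisors \cite{Tate} and Milne's theory \cite{Milne:polarization} rather than on the eigenvalue bookkeeping, and, relatedly, the careful tracking of roots of unity (the passage to $\mu_\infty$, i.e.\ to a finite extension) needed to apply the angle relation in its saturated form. A secondary point is controlling the angle lattice of $A\times E$, namely ruling out a spurious multiplicative relation between $\delta$ and the $\beta_i$ in the ordinary case, which is exactly what produces the clean supersingular/ordinary dichotomy. Once these inputs are in place, the degree restriction and the dimension counts ($2$, $2$, and $4$) are a finite check.
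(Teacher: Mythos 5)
Your treatment of $A\times A$ and of $A$ itself is complete and is exactly the intended derivation from the preceding Lemma: Frobenius semisimplicity reduces everything to eigenvalue monomials, the Tate condition becomes $\prod_i\beta_i^{c_i}\in\mu_\infty$ with $c_i=m_i-m_{i+g}$, the saturated form of the Lemma forces $c_1=\cdots=c_g=c$, and the box/parity constraints pin $c=\pm 2$ (resp.\ $c=\pm 1$), the middle degree, and multiplicity one for each sign. One simplification: since the paper defines a Tate class to be Lefschetz when it is \emph{written using Tate classes of degree $2$}, you do not actually need Tate's theorem or Milne's theory for the converse inclusion you worry about; a $c=0$ monomial visibly factors, up to sign, as a wedge of degree-$2$ Tate monomials pairing each $\alpha_i$-eigenvector with an $\alpha_{i+g}$-eigenvector, so the Lefschetz span is precisely the span of the $c=0$ monomials.

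The genuine gap is the $A\times E$ case, and it is not the ``secondary point'' you label it as: it is precisely what the statement needs. You assert that for ordinary $E$ ``$\delta$ has infinite order, $d=0$ is forced,'' but infinite order of $\delta$ only excludes relations $\prod_i\beta_i^{c_i}\delta^{d}\in\mu_\infty$ with $c\in\bZ\cdot(1,\dots,1)$; the Lemma says nothing about relations mixing $\delta$ with the $\beta_i$. Nothing in your argument excludes, say, $\delta\equiv\beta_1\beta_2 \pmod{\mu_\infty}$, and if that held (take $g\geq 5$) then $\alpha_3\alpha_4\cdots\alpha_g\gamma$ would be a Tate class of degree $g-1$ which is not a product of degree-$2$ Tate classes, i.e.\ an exotic class \emph{off} the middle degree, contradicting the very statement being proved. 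So both the middle-degree claim and the supersingular/ordinary dichotomy for $A\times E$ remain unproved until mixed relations are ruled out. They can be: suppose $\prod_i\beta_i^{c_i}\delta^{d}\in\mu_\infty$ with $d\neq 0$, and let $G$ be the Galois group. The stabilizer of $\gamma$ has index at most $2$, $G$ permutes the pairs $\{\beta_i^{\pm 1}\}$ transitively with a common sign (this is what the proof of the Lemma shows), and comparing each conjugated relation with the original one shows that the image of $(c_1,\dots,c_g)$ in $\bZ^g/\bZ\cdot(1,\dots,1)$ spans a character of $G$ inside the standard permutation representation on $\{1,\dots,g\}$. A nontrivial character occurring there forces the point stabilizer to lie in an index-$2$ subgroup, hence a two-block system on $\{1,\dots,g\}$, hence $g$ even; the trivial character does not occur by transitivity. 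Since $g$ is odd, $c\in\bZ\cdot(1,\dots,1)$, whence $\delta^{d}\in\mu_\infty$ and $E$ is supersingular. This transitivity-plus-parity argument is an additional idea, not a finite check, and without it your proposal only establishes the corollary for $A\times A$ and for $A$.
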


\begin{cor}\label{ordinary}
If $g$ is odd and $E$ is ordinary, then $A\times E$ has no exotic Tate classes. 
\end{cor}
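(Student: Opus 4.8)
The plan is to localize the question to a single K\"unneth component. By the previous corollary any exotic Tate class of $A\times E$ lies in the middle degree, so for $A\times E$ (of dimension $g+1$, which is even since $g$ is odd) I only need to examine
\[
H^{g+1}(A\times E)=\big(H^{g+1}(A)\otimes H^0(E)\big)\oplus\big(H^{g}(A)\otimes H^{1}(E)\big)\oplus\big(H^{g-1}(A)\otimes H^{2}(E)\big).
\]
First I would record that, for $g$ odd, $A$ itself has no exotic Tate class: rerunning the preceding lemma, a Tate class in $H^{2r}(A)$ corresponds to a subset $S\subseteq\{1,\dots,2g\}$ of size $2r$ with $\prod_{i\in S}\alpha_i\in q^{r}\mu_\infty$, which translates into a relation $\prod_{i}\beta_i^{m_i}\in\mu_\infty$ with $m_i\in\{-1,0,1\}$; the only relation, coming from $(\beta_1\cdots\beta_g)^N=1$, forces all $m_i=0$ (the alternative $m_i=\pm1$ for all $i$ would place the class in the odd degree $g$), i.e. the class is Lefschetz. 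Hence the summands $H^{g+1}(A)\otimes H^0(E)$ and $H^{g-1}(A)\otimes H^2(E)$ contribute only Lefschetz classes, whereas the ``odd $\otimes$ odd'' summand $H^{g}(A)\otimes H^{1}(E)$ contains no Lefschetz class at all. So every exotic Tate class is represented in $H^g(A)\otimes H^1(E)$, and it suffices to show there are none.

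Writing $\gamma,\bar\gamma=q/\gamma$ for the Frobenius eigenvalues of $E$, such a class amounts to a subset $S$ with $|S|=g$ and an eigenvalue $\delta\in\{\gamma,\bar\gamma\}$ with $\prod_{i\in S}\alpha_i\cdot\delta\in q^{(g+1)/2}\mu_\infty$. Here ordinariness enters: choose a place $w_0$ of $\ov\bQ$ above $p$, normalized by $w_0(q)=1$, at which $\delta$ is a unit (for ordinary $E$ both $\gamma$ and $\bar\gamma$ are units at some place above $p$). Then $w_0(\delta)=0$, so $\sum_{i\in S}w_0(\alpha_i)=(g+1)/2$. Put $K=\End(A)\otimes\bQ$ (a CM field of degree $2g$), $F=\bQ(\gamma)$, and $G_F=\Gal(\ov\bQ/F)$. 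Applying $\tau\in G_F$ fixes $\delta,q$ and permutes $\mu_\infty$, and $w_0\tau$ is again a place above $p$ where $\delta$ is a unit, so
\[
\textstyle\sum_{i\in S}w_0(\tau\alpha_i)=(g+1)/2\qquad(\tau\in G_F).
\]
Averaging $w_0(\tau\alpha_i)$ over the finite group through which $G_F$ acts replaces each $w_0(\alpha_i)$ by the mean of $w_0$ over the $G_F$-orbit of $\alpha_i$.

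The key step is that every $G_F$-orbit in $\{\alpha_1,\dots,\alpha_{2g}\}$ has mean slope exactly $\tfrac12$. If $F\not\subseteq K$, the characteristic polynomial of Frobenius stays irreducible over $F$, there is a single orbit, and the mean is $\tfrac{1}{2g}\sum_i w_0(\alpha_i)=\tfrac{1}{2g}\cdot g=\tfrac12$. If $F\subseteq K$ there are two orbits, interchanged by the (central) complex conjugation, hence with means $\mu$ and $1-\mu$; the angle-rank relation $\prod_{i=1}^{g}\beta_i\in\mu_\infty$ gives $\sum_{i=1}^g w(\alpha_i)=g/2$ at every $w\mid p$, and averaging this over $G_F$ forces $\mu=\tfrac12$ — this is exactly where $g$ odd is used, since it guarantees $2y\neq g$ for $y=|\{1,\dots,g\}\cap(\text{orbit})|$. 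Granting the claim, the averaged identity reads $|S|\cdot\tfrac12=(g+1)/2$, i.e. $g=g+1$, a contradiction. Therefore $H^g(A)\otimes H^1(E)$ carries no Tate class and $A\times E$ has no exotic Tate class.

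I expect the main obstacle to be precisely the non-transitive case $F\subseteq K$, where the two split primes $\fp,\bar\fp$ of $F$ produce two Galois orbits of eigenvalues and the naive averaging gives only the relation $\mu+(1-\mu)=1$; the resolution is to feed in the angle-rank relation to pin both orbit means to $\tfrac12$, and it is here, together with the choice of a place at which the ordinary unit root is a unit, that both hypotheses ``$E$ ordinary'' and ``$g$ odd'' are indispensable. I would also check two harmless points: that complex conjugation is central in the Galois group of the CM field $K$ (so it respects the pairing $\alpha_i\leftrightarrow\alpha_{i+g}$ and identifies the second orbit with the complex conjugate of the first), and that roots of unity can be ignored throughout by passing to a power of Frobenius.
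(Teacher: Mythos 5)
Your proof is correct, but it is worth noting that the paper offers no proof of this corollary at all: it is stated as an immediate consequence of the lemma asserting that the only relation among $\beta_1,\dots,\beta_g$ is $(\beta_1\cdots\beta_g)^N=1$, together with the preceding corollary. The intended mechanism there is purely multiplicative: a Tate class in $H^g(A)\otimes H^1(E)$ forces $\beta_E=q/\gamma^2$ (up to inversion and roots of unity) to equal a word $\prod_i\beta_i^{m_i}$ with $m_i\in\{0,\pm1\}$, and Galois equivariance combined with the lemma pins $m$ to be constant, whence $\beta_E$ is a root of unity and $E$ is supersingular --- an argument that simultaneously produces the explicit four-dimensional space of exotic classes in the supersingular case, which the paper needs later when applying Ancona's theorem. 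Your route diverges exactly at the crux: instead of classifying the relation lattice of $\{\beta_1,\dots,\beta_g,\beta_E\}$, you kill any candidate relation by Newton-slope averaging over $\Gal(\overline{\bQ}/F)$, using the place above $p$ at which the ordinary unit root $\delta$ is a unit, and pinning every orbit mean to $\tfrac12$ (via irreducibility over $F$ when $F\cap K=\bQ$, and via the angle-rank relation plus $2y\neq g$ when $F\subseteq K$). I checked the delicate points and they hold: an index-two subgroup acting on a transitive set has one orbit or two orbits of equal size; complex conjugation lies outside $\Gal(\overline{\bQ}/F)$ since $F$ is imaginary quadratic, hence swaps the two orbits (its centrality is not actually needed); and the final count $g/2=(g+1)/2$ is absurd. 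One harmless imprecision: if all $m_i=\pm1$, the corresponding class sits in an odd degree equal to $g$ plus twice the number of conjugate pairs $\{\alpha_i,\alpha_{i+g}\}$ contained in $S$, not necessarily in degree $g$ itself; the parity contradiction you invoke is unaffected. What your approach buys is a self-contained treatment of precisely the point the paper glosses over, namely ruling out relations mixing $\beta_E$ with a non-constant word in the $\beta_i$; what it gives up is the explicit description and dimension count of the exotic classes when $E$ is supersingular, which the paper's implicit lattice argument provides and later uses.
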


A similar argument shows the following:

\begin{lem}
Suppose the angle rank of $A$ equals $g$ and $g$ is odd, then any exotic $\ell$-adic Tate class of $A\times E$ is in the middle degree. If an exotic Tate class exists, then $E$ is ordinary and the space of exotic Tate classes is two-dimensional. 
\end{lem}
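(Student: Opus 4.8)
The plan is to identify the $\ell$-adic Tate classes of $A\times E$ with Frobenius-eigenvalue data and translate the Tate condition into multiplicative relations among the $\beta$'s. Since Frobenius acts semisimply on $H^*(A\times E)=\bigwedge^*(H^1(A)\oplus H^1(E))$, a Tate class of codimension $m$ is a combination of wedge vectors $e_S$, where $S$ runs over subsets of the $2g+2$ eigenvalues $\{\alpha_1,\dots,\alpha_{2g},\gamma,\overline\gamma\}$ of $H^1(A\times E)$ with $|S|=2m$ and $\prod_{i\in S}(\text{eigenvalue})=q^m$; here $\gamma,\overline\gamma$ are the eigenvalues of $E$ and I set $\beta_E\coloneqq q/\gamma^2$. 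I would split according to how $S$ meets $\{\gamma,\overline\gamma\}$. If $S$ contains neither or both of them, then $S\cap\{\alpha_i\}$ is itself a Tate class of $A$ (of codimension $m$ or $m-1$); since the angle rank of $A$ equals $g$, the group $\langle\beta_1,\dots,\beta_g\rangle$ is \emph{free} of rank $g$, so the squaring argument below (now with no relations at all among the $\beta_j$) shows $A$ has no exotic classes and such an $S$ is a union of conjugate pairs, hence Lefschetz — together with the divisor class $\{\gamma,\overline\gamma\}$ in the second case. So exotic classes can occur only when $S$ meets $\{\gamma,\overline\gamma\}$ in exactly one element.

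Next I would treat $\gamma\in S$, $\overline\gamma\notin S$ (the opposite case being its complex conjugate). Decompose $S\cap\{\alpha_i\}$ into full conjugate pairs indexed by $P$ and singletons, letting $T_+$ (resp. $T_-$) be the set of $j$ with only $\alpha_j$ (resp. only $\overline\alpha_j=\alpha_{j+g}$) in $S$. Using $\alpha_j\overline\alpha_j=q$, the condition $\prod_{i\in S}=q^m$ becomes a relation among the $\alpha_j$, $q$ and $\gamma$; squaring it and substituting $\alpha_j^2=q/\beta_j$ I expect to obtain the clean relation $\beta_E=\prod_{j\in T_-}\beta_j\prod_{j\in T_+}\beta_j^{-1}$, i.e. $\beta_E=\prod_j\beta_j^{n_j}$ with $n_j\in\{-1,0,1\}$ and support $T_+\cup T_-$. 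If $E$ is supersingular then $\beta_E$ is a root of unity, and since $\langle\beta_1,\dots,\beta_g\rangle$ is torsion-free this forces $\beta_E=1$ and $T_+=T_-=\emptyset$; but then $|S|=2|P|+1$ is odd, contradicting $|S|=2m$. Hence the existence of an exotic class forces $E$ ordinary.

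The decisive step is to pin down the support. Because $A$ is absolutely simple with angle rank $g$, its Frobenius characteristic polynomial is irreducible of degree $2g$, so the Galois group acts transitively on $\{\beta_1^{\pm1},\dots,\beta_g^{\pm1}\}$, exactly as in the angle-rank-$(g-1)$ lemma. Applying Galois elements to $\beta_E=\prod_j\beta_j^{n_j}$ and using that $\beta_E$ has only the two conjugates $\beta_E^{\pm1}$, I would argue that every signed permutation in the image of Galois carries $(n_j)$ to $\pm(n_j)$; since the $\beta_j$ are multiplicatively independent, the support of $(n_j)$ is Galois-stable, hence by transitivity equals all of $\{1,\dots,g\}$. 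Thus every $n_j=\pm1$, so $P=\emptyset$ and $|S|=g+1$, giving $m=(g+1)/2$: the exotic class lies in the middle degree. Finally, as $\langle\beta_1,\dots,\beta_g,\beta_E\rangle$ then has rank $g$, the relation $(n_j)$ is unique up to sign, so there is exactly one admissible $S$ with $\gamma\in S$ and precisely its complex conjugate with $\overline\gamma\in S$; the two corresponding wedge vectors are linearly independent and span the exotic classes, yielding a two-dimensional space. I expect the main obstacle to be this Galois-transitivity and support argument forcing full support — and hence the middle degree — since the translation into the $\beta$'s and the ordinary-versus-supersingular dichotomy are then relatively formal.
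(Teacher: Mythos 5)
Your proposal is correct and follows essentially the same route the paper intends: the paper proves this lemma by ``a similar argument'' to its Lemma on angle rank $g-1$, namely translating the Tate condition into multiplicative relations among the $\beta_j$ (and here $\beta_E$), then using that the Galois group acts transitively on the pairs $\{\beta_j^{\pm 1}\}$ and by $\pm 1$ on the (rank $\le 1$) relation lattice to force full support, whence the middle degree, the ordinarity of $E$, and the two-dimensionality. Your write-up simply makes explicit the steps the paper leaves implicit (the squaring/eigenvector combinatorics, the supersingular contradiction via torsion-freeness, and the uniqueness-of-relation dimension count), so there is nothing to add.
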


Next, we construct a motivic counterpart of possible exotic Tate classes using complex multiplication. 
Let us first recall some facts about the motive of $A$ \cite{Ancona}*{Section 4, Section 6}. 
Set $B\coloneqq \End (A)\otimes\bQ$ and write $L\subset \overline{\bQ}$ for the Galois closure of $B$ with $\Sigma\coloneqq \Hom (B, L)$. As in \cite{Ancona}*{6.6}, there is the following decomposition in the category of Chow motives with coefficients in $L$:
\[
[H^1 (A)]=\bigoplus_{\sigma \in \Sigma} M_{\sigma}
\]
that induces \cite{Ancona}*{6.7 (1)}
\[
[H^g (A)]=\bigoplus_{I\subset \Sigma, \# I =g} M_I,   
\]
where $M_I=\otimes_{i \in I} M_i$.  
This further induces the following decomposition \cite{Ancona}*{6.7 (2)}, in the category of Chow motives with coefficients in $\bQ$, 
\[
[H^g (A)]=\bigoplus M_{[I]}, 
\]
where $[I]$ denotes the Galois orbit of $I$ and $M_{[I]}$ is the direct sum of $M_?$ over the Galois orbit.  
This decomposition is orthogonal as numerical motives with respect to $\langle -, -\rangle^{\otimes g}_{1, \textnormal{mot}} $ defined in \cite{Ancona}*{3.6}. 
Similarly, $[H^{2g}(A\times A)]$ has such a decomposition and we have summands like
\[
M_{I^2}\coloneqq M_I\otimes M_{I}, \quad M_{[I^2]}. 
\]

\begin{prop}
Assume that the angle rank is $g-1$. 
\begin{enumerate}
    \item If $g$ is even, there exists at most one $[I]$ such that the $\ell$-adic realization of $M_{[I]}$ is exotic. The numerical algebraic classes in $M_{[I]}$ is zero or two-dimensional. 
    \item If $g$ is odd and $E$ is supersingular, there exists at most one $[I]$ such that the $\ell$-adic realization of $M_{[I]}\otimes H^1 (E)$ is exotic. The numerical algebraic classes in $M_{[I]}$ is zero or four-dimensional. 
    \item If $g$ is odd, there exists at most one $[I^2]$ such that the $\ell$-adic realization of $M_{[I^2]}$ is exotic. The numerical algebraic classes in $M_{[I^2]}$ is zero or two-dimensional.  
\end{enumerate}
\end{prop}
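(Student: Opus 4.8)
The plan is to translate the whole statement into a question about multiplicative relations among the Frobenius eigenvalues, identify the unique Galois orbit that can carry exotic classes, and only then worry about the dimension count.

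First I would encode a size-$g$ subset $I\subset\Sigma$ by recording, for each conjugate pair $\{i,\ov i\}$ (with $\ov i=i+g$ and $\alpha_{\ov i}=q/\alpha_i$), whether $I$ contains $\alpha_i$, $\alpha_{\ov i}$, both, or neither; write $a_i,b_i\in\{0,1\}$ for the two indicator values and set $c_i=a_i-b_i$. Using $\alpha_{\ov i}=q/\alpha_i$, the Frobenius eigenvalue of $M_I$ becomes $q^{\sum b_i}\prod_i\alpha_i^{c_i}$, so $M_I$ (resp.\ $M_I^{\otimes 2}$, resp.\ $M_I\otimes H^1(E)$ with $E$ supersingular) is a Tate class precisely when a monomial $\prod_i\alpha_i^{c_i}q^{-m}$ is a root of unity. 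I would phrase this as membership of $(c_1,\dots,c_g;-m)$ in the relation lattice $R=\ker(\bZ^g\oplus\bZ\to\ov\bQ^\times/\mu)$, $(n_i;m)\mapsto\prod\alpha_i^{n_i}q^m$. Since $q$ has weight $2$ while the $\beta_i$ all have absolute value $1$, the angle-rank hypothesis together with the previous lemma forces $R$ to have rank $1$, generated by $(1,\dots,1;-g/2)$ when $g$ is even and by $(2,\dots,2;-g)$ when $g$ is odd (the factor $2$ being forced because $g/2\notin\bZ$).

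Because $c_i\in\{-1,0,1\}$, the only solutions are $c_i\equiv 0$ (the Lefschetz subsets, unions of $g/2$ pairs, which do not occur at all for $g$ odd) and $c_i\equiv\pm1$, that is $I=I_0:=\{1,\dots,g\}$ or $I=\ov I_0:=\{g+1,\dots,2g\}$. These two subsets are exotic and are interchanged by complex conjugation, so they lie in a single Galois orbit, which already yields ``at most one $[I]$'' (resp.\ $[I^2]$). I would then note that every Galois translate of $I_0$ is again a Tate class, since the eigenvalue is sent to $\tau(q^{g/2}\cdot(\text{root of unity}))=q^{g/2}\cdot(\text{root of unity})$ as $q\in\bQ$, and that Galois preserves exoticness because it commutes with complex conjugation $c$ and hence permutes the pairs. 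Thus the orbit is exactly $\{I_0,\ov I_0\}$, and the $\ell$-adic realization of $M_{[I_0]}$ (resp.\ $M_{[I_0^2]}$, resp.\ $M_{[I_0]}\otimes H^1(E)$) is $2$-dimensional (resp.\ $2$-, resp.\ $4$-dimensional), consisting entirely of Tate classes.

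The remaining and hardest point is the dimension count for numerical algebraic classes, i.e.\ ruling out a $1$-dimensional (resp.\ non-$4$-dimensional) space. For this I would exhibit a field acting on the motive: the CM field $B=\End(A)\otimes\bQ$ acts on each $M_i$ through $\sigma_i$, hence on $M_{I_0}$ and $M_{\ov I_0}$, and descends to an imaginary quadratic field $K_0\subset\ov\bQ$ (generated by $\prod_{i\in I_0}\alpha_i$, which is non-rational in the genuinely exotic case) acting on the $\bQ$-motive $M_{[I_0]}$ and making its rank-$2$ realization a free rank-$1$ module over $K_0\otimes\bQ_\ell$; in the supersingular case one tensors in the quaternion algebra $\End(E)\otimes\bQ$. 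Since algebraic correspondences commute with these endomorphisms, the space of numerical algebraic classes is stable under $K_0$ (resp.\ under $K_0$ and $\End(E)\otimes\bQ$), and by Jannsen's semisimplicity of numerical motives it is a submodule of an irreducible $\bQ$-module, hence $0$ or everything. I expect the main obstacle to be precisely the verification that $K_0$ acts irreducibly over $\bQ$—equivalently, that no single exotic class is individually rational—since this is the step where one must genuinely use the CM structure rather than the eigenvalue bookkeeping alone, and where the quaternionic enhancement in the supersingular case must be made precise.
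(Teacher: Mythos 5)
Your encoding of subsets by the vectors $(c_1,\dots,c_g)$, the computation of the relation lattice $R$ (which is just a restatement of the paper's Lemma that the only relation is $(\beta_1\cdots\beta_g)^N=1$), the conclusion that the only non-Lefschetz subsets carrying Tate classes are $I_0$ and $\ov I_0$, and the argument that these form a full Galois orbit of exactly two elements, are all correct and coincide with the paper's route: this is precisely the ``description of exotic Tate classes'' plus the ``key claim'' invoked in the paper's proof. The genuine gap is in the last step, the dimension count, where you replace the paper's citation of Ancona's Proposition 6.8 by a module argument over $K_0=\bQ(\prod_{i\in I_0}\alpha_i)$, resting on the assertion that this element is non-rational ``in the genuinely exotic case''. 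That assertion is false: exoticness of $M_{I_0}$ reflects only the shape of $I_0$ (one element from each conjugate pair, hence not a union of pairs) and says nothing about rationality of its Frobenius eigenvalue. Concretely, $\prod_{i\in I_0}\alpha_i\in\bQ$ exactly when $\beta_1\cdots\beta_g=1$, which is the case $N=1$ of the Lemma, is fully compatible with angle rank $g-1$, and in fact always holds after replacing $\bF_q$ by $\bF_{q^N}$ (base extension raises each $\beta_i$ to a power). So the case your argument cannot handle --- Frobenius acting on $M_{[I_0]}$ by a rational scalar, so that $\bQ[\Frob]=\bQ$ and $K_0$-stability excludes nothing, leaving a $1$-dimensional space possible --- is the typical case, not a degenerate corner. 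Two further inaccuracies feed this: $b\mapsto\wedge^g b^*$ is multiplicative but not additive in $b$, so ``$B$ acts on $M_{I_0}$'' is not a ring action and the honest algebra generated by the Frobenius correspondence alone is exactly $\bQ(\prod_{i\in I_0}\alpha_i)$; and for $g$ odd with $\beta_1\cdots\beta_g=1$ one has $\prod_{i\in I_0}\alpha_i=\pm q^{(g-1)/2}\sqrt q$, which is totally real, so even when irrational it is not imaginary quadratic.

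The correct count --- and this is the content of Ancona 6.8 that the paper invokes --- needs no field action at all: by Galois descent on the coefficient field, $\Hom(\mathbbm{1},M_{[I]})\otimes_\bQ L\cong\bigoplus_{I'\in[I]}\Hom(\mathbbm{1},M_{I'})$, each summand is at most one-dimensional over $L$ since $M_{I'}$ has rank one, and the Galois group permutes the summands transitively, so the $\bQ$-dimension is $0$ or $\#[I]=2$ (and $0$ or $4$ for $M_{[I]}\otimes H^1(E)$ with $E$ supersingular, using additionally that $\End(E)\otimes L\cong M_2(L)$ forces each $L$-summand there to be even-dimensional --- this is the ``quaternionic'' input you left imprecise). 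If you insist on a field action, the right field is not $\bQ(\prod_{i\in I_0}\alpha_i)$ but the fixed field of the index-two stabilizer of $I_0$ in $\Gal(L/\bQ)$, which is honestly imaginary quadratic because the stabilizer misses complex conjugation; one can show it is the algebra generated by \emph{all} correspondences $\wedge^g b^*$, $b\in B$, restricted to $M_{[I_0]}$ (evaluate the two characters $b\mapsto\prod_{\sigma\in I_0}\sigma(b)$ and $b\mapsto\prod_{\sigma\in\ov I_0}\sigma(b)$ at $b=\pi+n$, $n\in\bZ$, to see they differ). That field is what the paper later uses to split $M_{[I]}\otimes H^1(E)$ in the supersingular case; for the present proposition the descent count is the cleaner fix.
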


\begin{proof}
This follows from the description of exotic Tate classes and \cite{Ancona}*{6.8}. The key claim here is that the relevant Galois orbit only has two elements, and it controls the dimension of numerical algebraic classes.  
\end{proof}

We call $M_{[I]}, M_{[I]}\otimes H^1(E), M_{[I^2]}$ \emph{exotic} if it has a nonzero numerical algebraic class. 
If it is the case, their $\ell$-adic realizations are the only exotic Tate classes. 
By \cite{Ancona}*{5.3} and Lemma \ref{independence}, the numerical Hodge standard conjecture for $A, A\times A, A\times E$ reduces to the corresponding problem on $M_{I}, M_{[I]}\otimes H^1(E), M_{[I^2]}$ respectively, with respect to $\langle -, -\rangle^{\otimes g}_{1, \textnormal{mot}}, \langle -, -\rangle^{\otimes g+1}_{1, \textnormal{mot}}. \langle -, -\rangle^{\otimes 2g}_{1, \textnormal{mot}}$ for some polarization. 

A similar construction makes sense for $A \times E$ if $g$ is odd, the angle rank is $g$, and $E$ is ordinary.  

Finally, when $g$ is odd and $E$ is supersingular, an exotic $M_{[I]}\otimes H^1(E)$ has a decomposition into rank 2 motives
\[
M_{[I]}\otimes H^1(E) =M_1 \oplus M_2
\]
orthogonal with respect to $\langle -, -\rangle^{\otimes g+1}_{1, \textnormal{mot}}$. 
More precisely, the Galois action on $[I]$ gives rise to an imaginary quadratic field $F$ inside $B$ and there is an embedding $F\hookrightarrow \End (E_{\overline{\bF}_q})\otimes \bQ$ by exactly the same argument as in the proof of \cite{Ancona}*{7.16}. The actions of $F$ on $M_{[I]}$ and $H^1(E)$ induce the above decomposition. 

\section{Ancona's theorem for rank 2 motives}
To conclude the proof of Theorem \ref{main:general}, we recall Ancona's theorem and then use CM liftings to apply it. 

Let $K$ be a $p$-adic field with the ring of integers $O_K$ with residue field $k$. 
Fix an embedding $\sigma \colon K\hookrightarrow \bC$. 
We shall use the language of relative Chow motives over $O_K$, equipped with base changes to $\bC$ via $\sigma$ and to $k$ via the specialization.   
For a relative Chow motive $M$ over $O_K$, we write $V_B$ for the Betti realization of $M_{\bC}$. 
Let $V_Z$ denote the space of numerical algebraic cycles in $M_k$, i.e., homomorphisms from $\mathbbm{1}$ modulo numerical equivalences. Both $V_B$ and $V_Z$ are $\bQ$-vector spaces. 
If $M$ has a quadratic form
\[
q\colon \Sym^2 (M) \to \mathbbm{1}, 
\]
then it induces ($\bQ$-valued) quadratic forms $q_B, q_Z$ on $V_B, V_Z$ respectively. 

\begin{thm}[Ancona \cite{Ancona}*{8.1}]\label{rank 2}
Let $M$ be a relative Chow motive over $O_K$ with a quadratic form $q$. 
Assume that
\begin{itemize}
    \item $\dim_{\bQ} V_B=\dim_{\bQ} V_Z=2$, and 
    \item $q_B\colon V_B\times V_B \to \bQ$ is a polarization of Hodge structures. 
\end{itemize}
Then, $q_Z$ is positive definite. 
\end{thm}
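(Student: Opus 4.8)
The plan is to regard $q_B$ and $q_Z$ as two $\bQ$-rational quadratic forms that become the \emph{same} form after base change to $\bQ_\ell$ for every $\ell\neq p$, and then to reconstruct the isometry class of $q_Z$ over $\bQ$ place by place, extracting the archimedean positivity last from Hilbert reciprocity.

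First I would pin down the comparison between the two realizations. Fix $\ell\neq p$ and let $V_\ell$ be the $\ell$-adic realization of $M$; smooth proper base change over $O_K$ gives a specialization isomorphism identifying the $\ell$-adic realizations of $M_{\bC}$ and $M_k$ compatibly with $q$, so that $V_B\otimes_{\bQ}\bQ_\ell\cong V_\ell$ carries $q_B\otimes\bQ_\ell=q_\ell$. The cycle class map sends $V_Z$ to the Tate classes in $V_\ell$; since homological equivalence implies numerical equivalence, $V_Z$ is a quotient of the space of homological classes, which injects into $V_\ell$, and the equality $\dim_{\bQ}V_Z=2=\dim_{\bQ_\ell}V_\ell$ forces homological and numerical equivalence to coincide and the cycle class map to identify $V_Z\otimes_{\bQ}\bQ_\ell$ with $V_\ell$. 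In particular $q_Z\otimes\bQ_\ell$ is identified with $q_\ell=q_B\otimes\bQ_\ell$, so $q_Z$ and $q_B$ are isometric over $\bQ_\ell$ for every $\ell\neq p$; equivalently their Hasse invariants $\epsilon_\ell$ agree and $\det q_Z\equiv\det q_B$ in $\bQ_\ell^{\times}/(\bQ_\ell^{\times})^2$ at every finite place $\neq p$.

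The point is that this does \emph{not} let one transport the signature directly: a binary $\bQ$-form isometric to $q_B$ at all $\ell\neq p$ may still be negative definite, differing from $q_B$ at $p$ and at $\infty$ simultaneously, so the entire content is concentrated at those two places. I would treat $p$ through the crystalline (equivalently de Rham) realization over $W(k)[1/p]$: the cycle classes in $V_Z$ are the $\varphi$-eigenvectors of the correct slope, so that $q_Z\otimes\bQ_p$, together with $\det q_Z$ and $\epsilon_p(q_Z)$, is read off from the Frobenius $\varphi$ and its slopes, while $V_{\mathrm{dR}}=V_{\mathrm{cris}}\otimes K$ carries the Hodge filtration matching $V_B$ and hence the polarization. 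Here $\det M$ is a rank one motive whose Frobenius eigenvalue is $\pm(\#k)^{w}$, so the $p$-adic valuation of $\det q_Z$ is a slope determined by the weight $w$. With $\epsilon_v(q_Z)=\epsilon_v(q_B)$ for all finite $v\neq p$ and the local invariant at $p$ in hand, the product formula $\prod_v\epsilon_v(q_Z)=1$ together with the known sign of the polarization $q_B$ at $\infty$ forces $\epsilon_\infty(q_Z)$, that is the signature of $q_Z$, to be that of a positive definite form, whence $q_Z$ is positive definite over $\mathbf R$ and so over $\bQ$.

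The main obstacle is the place $p$: one must reconcile the Hodge filtration, which governs $q_B$ and the archimedean signature, with the Frobenius slopes, which govern the $p$-adic invariant of $q_Z$ — in other words make precise, through the crystalline comparison for this rank two motive over $O_K$, the interaction of the Hodge and Newton polygons, so that the positivity of the polarization propagates from $\infty$ to $p$. This is exactly where $p$-adic Hodge theory enters and where the genuine ``Hodge type'' content of the statement lies; once the local invariant at $p$ is computed, the archimedean positivity is a formal consequence of Hilbert reciprocity.
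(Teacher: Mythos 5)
This statement is not proved in the paper at all: it is quoted, with attribution, as \cite{Ancona}*{8.1}, so the only meaningful benchmark is Ancona's own argument. Your outer layer does match the skeleton of that argument and is essentially sound: the $\ell$-adic step (smooth proper base change, the cycle class map, and the dimension count forcing homological and numerical equivalence to coincide, hence an isometry $q_Z\otimes\bQ_\ell\cong q_B\otimes\bQ_\ell$ for $\ell\neq p$) is correct, and the local--global bookkeeping is the right one, provided you add two small repairs: first, you need Chebotarev (or Dirichlet) to promote ``$\det q_Z/\det q_B$ is a square in every $\bQ_\ell$, $\ell\neq p$'' to ``$\det q_Z=\det q_B$ in $\bQ^\times/(\bQ^\times)^2$'', which is what gives $\det q_Z>0$ and lets the signature be read from $\epsilon_\infty$ at all; second, reciprocity gives $\epsilon_\infty(q_Z)=\epsilon_p(q_B)\,\epsilon_p(q_Z)\,\epsilon_\infty(q_B)$, so what you need is precisely the \emph{equality} $\epsilon_p(q_Z)=\epsilon_p(q_B)$, i.e.\ an isometry $q_B\otimes\bQ_p\cong q_Z\otimes\bQ_p$, not merely ``the local invariant of $q_Z$ at $p$ in hand''.

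That equality at $p$ is not a final technical check: it \emph{is} Ancona's theorem, and your proposal leaves it as an acknowledged obstacle rather than proving it. Moreover, the one concrete mechanism you propose for it cannot work as stated. Under the hypotheses, the (twisted) crystalline Frobenius of $M_k$ is forced to be trivial --- the two-dimensional space of algebraic classes exhausts $H_{\mathrm{crys}}(M_k)$ --- so the Newton data is isoclinic and identical in all cases and carries no information about $\epsilon_p$; and Hasse invariants are not valuation-theoretic in general (for a ramified quadratic extension $E/\bQ_p$, both cosets of $N_{E/\bQ_p}(E^\times)$ in $\bQ_p^\times$ contain elements of every valuation). The genuine difficulty is that the only bridge between the two $\bQ$-structures at $p$ is the crystalline comparison isomorphism, which identifies the forms only after tensoring with $B_{\mathrm{crys}}$ --- and over $B_{\mathrm{crys}}$ the invariant you must preserve collapses: two binary $\bQ_p$-forms with equal discriminant differ by a quaternion algebra over $\bQ_p$, which is already split by the unramified quadratic extension $\bQ_{p^2}\subset B_{\mathrm{crys}}$. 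So one cannot ``compute both invariants and compare''; one must pin down the $p$-adic period matrix relating $V_B\otimes\bQ_p$ and $V_Z\otimes\bQ_p$ inside the common $B_{\mathrm{crys}}$-module, using the Frobenius equation it satisfies, the Hodge filtration with its jumps in degrees $(2a,2b)$ and $(2b,2a)$, and weak admissibility. That analysis is the actual content of \cite{Ancona}*{8.1}; your plan stops exactly where it begins.
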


\begin{proof}[Proof of Theorem \ref{main:general}]
Following the proof of \cite{Ancona}*{3.18}, we use CM liftings to prove Theorem \ref{main:general}. 
Let $A$ be as in Theorem \ref{main:general}. 
Set $B\coloneqq \End (A_{\bF_q})\otimes \bQ$.  
After enlarging $\bF_q$, we can find a finite extension $O_K$ of $W(\bF_q)$ and an abelian scheme $\cA$ over $O_K$ with $B\to \End (\cA)$ such that the reduction $\cA_{\bF_q}$ is $B$-isogenous to $A$. 
We may replace $A$ by $\cA_{\bF_q}$ and assume that a polarization on $\cA_{\bF_q}$ lifts to a polarization on $\cA$. 

If $A^2$ has no exotic classes, there is nothing to prove. So, assume some $M_{[I^2]}$ is exotic. 
By \cite{Ancona}*{5.3}, it suffices to show the the paring $\langle -, - \rangle^{\otimes 2g}_{1, \textnormal{mot}}$ is positive definite on the exotic $M_{[I^2]}$. By the construction of $M_{[I^2]}$ and the paring, it lifts to a relative Chow motive with a quadratic form over $O_K$ (cf. \cite{Ancona}*{4.1, 4.2} and references therein, and the proof of \cite{Ancona}*{3.18}). 
By the definition of the exotic $M_{[I^2]}$, this lift satisfies the assumption of Theorem \ref{rank 2}. 
So, $\langle -, - \rangle^{\otimes 2g}_{1, \textnormal{mot}}$ is positive definite on $M_{[I^2]}$. 

The case of $A\times E$ is similar as in \cite{Ancona}. Let us consider the case $E$ is supersingular. The decomposition
\[
M_{[I]}\otimes H^1(E) =M_1 \oplus M_2
\]
is constructed using the action of the imaginary quadratic field $F\subset B$ on $E$, and it may also lifts by taking a lift of $E$ with the action of $F$. 
\end{proof}

\begin{rem}
Consider the case $A\times A$. 
The Hodge type of the Betti realization of the lifts of the exotic classes have the form of $(2a, 2b), (2b. 2a)$ with $a+b=g$. In particular, it is never $(g,g)$ and any exotic class cannot be lifted to an algebraic class of $\cA_{\bC} \times \cA_{\bC}$. Therefore, Ancona's theorem is essential. 
\end{rem}

Remark \ref{even} can be proved in the same way. 

\begin{bibdiv}
\begin{biblist}
\bib{Ancona}{article}{
   author={Ancona, Giuseppe},
   title={Standard conjectures for abelian fourfolds},
   journal={Invent. Math.},
   volume={223},
   date={2021},
   number={1},
   pages={149--212},
   issn={0020-9910},
   review={\MR{4199442}},
   doi={10.1007/s00222-020-00990-7},
}
\bib{Clozel}{article}{
   author={Clozel, L.},
   title={Equivalence num\'{e}rique et \'{e}quivalence cohomologique pour les
   vari\'{e}t\'{e}s ab\'{e}liennes sur les corps finis},
   language={French},
   journal={Ann. of Math. (2)},
   volume={150},
   date={1999},
   number={1},
   pages={151--163},
   issn={0003-486X},
   review={\MR{1715322}},
   doi={10.2307/121099},
}
\bib{DKZB}{article}{
   author={Dupuy, Taylor},
   author={Kedlaya, Kiran S.}, 
   author={Zureick-Brown, David},
   title={Angle ranks of abelian varieties},
   eprint={https://arxiv.org/abs/2112.02455}, 
}
\bib{IIK}{misc}{
    author={Ito, Kazuhiro},
    author={Ito, Tetsushi},
    author={Koshikawa, Teruhisa},
    note={in preparation},
}
\bib{LO}{article}{
   author={Lenstra, Hendrik W., Jr.},
   author={Oort, Frans},
   title={Simple abelian varieties having a prescribed formal isogeny type},
   journal={J. Pure Appl. Algebra},
   volume={4},
   date={1974},
   pages={47--53},
   issn={0022-4049},
   review={\MR{354686}},
   doi={10.1016/0022-4049(74)90029-2},
}
\bib{LZ}{article}{
   author={Lenstra, Hendrik W., Jr.},
   author={Zarhin, Yuri G.},
   title={The Tate conjecture for almost ordinary abelian varieties over
   finite fields},
   conference={
      title={Advances in number theory},
      address={Kingston, ON},
      date={1991},
   },
   book={
      series={Oxford Sci. Publ.},
      publisher={Oxford Univ. Press, New York},
   },
   date={1993},
   pages={179--194},
   review={\MR{1368419}},
}
\bib{Markman}{article}{
    author={Markman, Eyal}, 
    title={The monodromy of generalized Kummer varieties and algebraic cycles on their intermediate Jacobians}, 
    eprint={https://arxiv.org/abs/1805.11574}, 
}
\bib{Milne:polarization}{article}{
   author={Milne, J. S.},
   title={Polarizations and Grothendieck's standard conjectures},
   journal={Ann. of Math. (2)},
   volume={155},
   date={2002},
   number={2},
   pages={599--610},
   issn={0003-486X},
   review={\MR{1906596}},
   doi={10.2307/3062126},
}
\bib{Milne2022}{article}{
    author={Milne, J. S.}, 
    title={The Tate and standard conjectures for certain abelian varieties}, 
    eprint={https://arxiv.org/abs/2112.12815},
}
\bib{Tankeev:prime}{article}{
   author={Tankeev, S. G.},
   title={Cycles of abelian varieties of prime dimension over finite and
   number fields},
   language={Russian},
   journal={Izv. Akad. Nauk SSSR Ser. Mat.},
   volume={47},
   date={1983},
   number={2},
   pages={356--365},
   issn={0373-2436},
   review={\MR{697300}},
}
\bib{Tate}{article}{
   author={Tate, John},
   title={Endomorphisms of abelian varieties over finite fields},
   journal={Invent. Math.},
   volume={2},
   date={1966},
   pages={134--144},
   issn={0020-9910},
   review={\MR{206004}},
   doi={10.1007/BF01404549},
}
\bib{Zarhin:nonsimple}{article}{
   author={Zarhin, Yu. G.},
   title={The Tate conjecture for nonsimple abelian varieties over finite
   fields},
   conference={
      title={Algebra and number theory},
      address={Essen},
      date={1992},
   },
   book={
      publisher={de Gruyter, Berlin},
   },
   date={1994},
   pages={267--296},
   review={\MR{1285371}},
}
\bib{Zarhin:eigenvalues}{article}{
   author={Zarhin, Yuri G.},
   title={Eigenvalues of Frobenius endomorphisms of abelian varieties of low
   dimension},
   journal={J. Pure Appl. Algebra},
   volume={219},
   date={2015},
   number={6},
   pages={2076--2098},
   issn={0022-4049},
   review={\MR{3299720}},
   doi={10.1016/j.jpaa.2014.07.024},
}
\end{biblist}
\end{bibdiv}

\end{document}